\DeclareSymbolFont{cyrletters}{OT2}{wncyr}{m}{n}
\DeclareMathSymbol{\Sha}{\mathalpha}{cyrletters}{"58}
\newcommand{\bC}{{\mathbb{C}}}
\newcommand{\bQ}{{\mathbb{Q}}}
\newcommand{\bR}{{\mathbb{R}}}
\newcommand{\bZ}{{\mathbb{Z}}}
\newcommand{\Bx}{{\mathbf{x}}}
\newcommand{\G}{{\mathcal{G}}}
\renewcommand{\L}{{\mathcal{L}}}
\renewcommand{\O}{{\mathcal{O}}}
\newcommand{\Gal}{\operatorname{Gal}}
\newcommand{\GL}{\operatorname{GL}}
\newcommand{\PGL}{\operatorname{PGL}}
\newcommand{\Sym}{\operatorname{Sym}}
\newcommand{\ol}{\overline}
\newcommand{\upchi}{{\raise.35ex\hbox{$\chi$}}}
\newcommand{\SL}{\operatorname{SL}}
\newtheorem{theorem}{Theorem}[section]
\newtheorem{corollary}[theorem]{Corollary}
\newtheorem{proposition}[theorem]{Proposition}
\newtheorem{lemma}[theorem]{Lemma}
\theoremstyle{definition}
\newtheorem{question}[theorem]{Question}
\numberwithin{equation}{section}
\begin{document}
	
	\title{Covariants and simultaneous diagonalization of pairs of ternary quadratic forms, and binary quartic forms}
	\dedicatory{Dedicated to the occasion of Trevor D.~Wooley's 60th birthday.}
	\thanks{The author is supported by NSERC Discovery Grant RGPIN-2024-06810.} 
	
	\author{Stanley Yao Xiao}
	\address{Department of Mathematics and Statistics \\
		University of Northern British Columbia \\
		3333 University Way \\
		Prince George, British Columbia, Canada \\  V2N 4Z9}
	\email{StanleyYao.Xiao@unbc.ca}
	\indent
	
	
	\begin{abstract}
	In this paper we prove a correspondence between a canonical degree six covariant of binary quartic forms $F$ and a cubic covariant of a pair of ternary quadratic forms $(f_A, f_B)$. In the process we obtain a canonical way to diagonalize a pair of $n$-ary quadratic forms over any field $K$ of characteristic zero. As a corollary, we give a precise criterion to decide whether a pair of $n$-ary quadratic forms over $\bQ$ is diagonalizable over $\bQ$. 
	\end{abstract}
	
	\maketitle

	\section{Introduction}
	\label{Intro}
	
Perhaps one of the most well-known stories in number theory in the past two decades is the one told primarily by Manjul Bhargava. Among Bhargava's achievements is the parametrization \cite{Bha2} and enumeration \cite{Bha3} of quartic rings and fields (ordered by absolute discriminant). Here Bhargava's key insight is that integral orbits of pairs of integral ternary quadratic forms correspond to quartic rings, as well as an identified cubic resolvent ring. In subsequent work with Arul Shankar \cite{BhaSha}, they used a related parametrization of $2$-Selmer elements of elliptic curves by $\GL_2(\bZ)$-equivalence classes of integral binary quartic forms to show that the average rank of elliptic curves in short Weierstrass model ordered by naive height is at most $3/2$, a stunning achievement that marked an epochal shift. \\

One of the key ingredients of Bhargava and Shankar's paper \cite{BhaSha} is the following embedding result. Let 
\begin{equation} \label{V4} V_4(\bZ) = \{a_4 x^4 + a_3 x^3 y + a_2 x^2 y^2 + a_1 xy^3 + a_0 y^4 : a_i \in \bZ, i = 0, 1, 2, 3, 4\}
\end{equation}
be the lattice of integral binary quartic forms and 
\begin{equation} W_4(\bZ) = \left\{\left(\begin{bmatrix} a_{11} & a_{12}/2 & a_{13}/2 \\ a_{12}/2 & a_{22} & a_{23}/2 \\ a_{13}/2 & a_{23}/2 & a_{33} \end{bmatrix}, \begin{bmatrix} b_{11} & b_{12}/2 & b_{13}/2 \\ b_{12}/2 & b_{22} & b_{23}/2 \\ b_{13}/2 & b_{23}/2 & b_{33} \end{bmatrix} \right): a_{ij}, b_{ij} \in \bZ,1 \leq i, j \leq 3 \right\}
\end{equation}
be the lattice corresponding to Gram matrices of pairs of ternary quadratic forms with integer coefficients. We also note the notation $W_4(\bZ) = (\bZ^2 \otimes \Sym^2 \bZ^3)^\ast$ in \cite{Bha3}. Then M.~Matchett Wood proved in \cite{Wood} that there is a canonical embedding $\phi: V_4(\bZ) \hookrightarrow W_4(\bZ)$ given by 
\begin{equation} \label{phiemb} \phi : F(x,y) = a_4 x^4 + a_3 x^3 y + a_2 x^2 y^2 + a_1 xy^3 + a_0 y^4
\end{equation}
\[\mapsto \left(\begin{bmatrix} 0 & 0 & 1/2 \\ 0 & - 1 & 0 \\ 1/2 & 0 & 0 \end{bmatrix}, \begin{bmatrix} a_4 & a_3/2 & 0 \\ a_3/2 & a_2 & a_1/2 \\ 0 & a_1/2 & a_0 \end{bmatrix} \right) = (A_0, B_F).\]
This map is canonical in the sense that the action of $\PGL_2(\bZ)$ on $V_4(\bZ)$ is realized in the group $\GL_2 (\bZ) \times \SL_3(\bZ)$ by the mapping 
\begin{equation} \rho : \PGL_2(\bZ) \rightarrow \GL_2(\bZ) \times \SL_3(\bZ) 
\end{equation}
\[\begin{bmatrix} a_{11} & a_{12} \\ a_{21} & a_{22} \end{bmatrix} \mapsto \frac{1}{a_{11} a_{22} - a_{12} a_{21}} \begin{bmatrix} a_{22}^2 & a_{21} a_{22} & a_{22}^2 \\ 2 a_{12} a_{22} & a_{11} a_{22} + a_{12} a_{21} & 2 a_{11} a_{21} \\ a_{12}^2 & a_{11} a_{12} & a_{11}^2 \end{bmatrix}.\]
(see Equation (29) in \cite{BhaSha}). \\

Importantly, the map $\phi$ in (\ref{phiemb}) is \emph{discriminant preserving}. This is crucial, because the prehomogeneous vector space $F^2 \otimes \Sym^2 F^3$ with the action of $\GL_2 \times \GL_3$ has just one polynomial discriminant, namely the discriminant. This crucial property was needed by Bhargava and Shankar to carry out their proof in \cite{BhaSha}. \\

To relate the rational space of binary quartic forms $V_4(\bQ)$ to $2$-Selmer elements of elliptic curves over $\bQ$, Bhargava and Shankar had consulted the important work of Cremona in \cite{Cre}. There Cremona studied the theory of invariants and covariants of binary cubic and quartic forms extensively, with the motivation of carrying out reduction for such forms. \\

It turns out that reduction theory of binary forms is important for solving \emph{Thue equations} (see \cite{Stew} for an authoritative modern reference). Motivated by this relation, I carried out an extensive study of the relation of invariants and covariants of binary cubic and quartic forms with their $\GL_2$-automorphism groups in \cite{X1}. This study eventually led to the joint work \cite{SX} with C.~L.~Stewart. \\

Indeed, the problem we studied in \cite{SX} was partially motivated by extending the work of C.~Skinner and T.~D.~Wooley on sums of two $k$-th powers \cite{SW} and subsequent work of Bennett, Dummigan, and Wooley \cite{BDW}. \\

In \cite{BhaSha} there does not seem to be a necessity to rely on the structure of \emph{covariants} of binary quartic forms nor pairs of ternary quadratics for their argument. Indeed, the theory of $\GL_3$-covariants of pairs of quadratic forms seemed rather mysterious. A common occurrence in the subject of arithmetic statistics is that a brilliant parametrization or identity is thought to be recently discovered, only for the community to realize that it had been known to some old masters in days of yore. This is such an occurrence, as we note that the structure of covariants of pairs of $n$-ary quadratic forms had been discussed nearly a century ago by J.~Williamson in \cite{Will}. \\

Our goal in this paper is to show that the embedding (\ref{phiemb}) not only gives a discriminant preserving map, which establishes a connection between the rings of polynomial invariants of $V_4(\bZ)$ and $W_4(\bZ)$ respectively, but also gives a connection to \emph{covariants}. The ring of covariants of $V_4$ under $\GL_2$-action is generated by $F$, the original form, $H_F$, the Hessian covariant, and 
\[F_6(x,y) = \frac{1}{36} \begin{vmatrix} \dfrac{\partial F}{\partial x} & & \dfrac{\partial F}{\partial y} \\ \\ \dfrac{\partial H_F}{\partial x} & & \dfrac{\partial H_F}{\partial y} \end{vmatrix},\]
which as far as I know is not given any particular name. We will simply refer to it as the ``$F_6$-covariant". The triple $(F, H_F, F_6)$ are not independent but satisfies a syzygy; see \cite{Cre}. \\

On the other hand, as noted in \cite{Will} the ring of polynomial covariants of $W_n$, or pairs of $n$-ary quadratic forms, under $\GL_n$-action is generated by $n$ quadratic covariants (including the two forms in the pair) as well as the Jacobian determinant of the $n$-quadratic forms. For $n = 3$, the three quadratic covariants can be taken to be 
\begin{equation} \label{maform} f_A(\Bx) = \Bx^t A \Bx, \quad f_B(\Bx) = \Bx^t B \Bx, \quad \text{and} \quad g_B(\Bx) = \Bx^t (A B^\dagger A) \Bx\end{equation} 
where $M^\dagger = \operatorname{Adj}(M)$ is the adjugate matrix of $M$. The Jacobian determinant of the three forms is denoted by $C_{3}(\Bx)$, or the \emph{cubicovariant} of $(A,B)$. \footnote{``While normally deprecating the use of portmanteau or blended words, we have the authority of Cayley, Salmon, and others for this nomenclature" - C.~Hooley, \cite{Hoo}} \\

Our main result is the following observation: 

\begin{theorem} \label{MT} Let 
\begin{equation} \label{binform} F(x,y) = a_4 x^4 + a_3 x^3 y + a_2 x^2 y^2 + a_1 xy^3 + a_0 y^4\end{equation} 
be a binary quartic form defined over a field $K$ and let $(A_0, B_F) = \phi(F)$, with $\phi$ given in (\ref{phiemb}). Let $F_6$ be the $F_6$-covariant of $F$ and let $C_3$ be the cubicovariant of the pair $(A_0, B_F) \in W_4(F)$. Then 
\begin{equation} F_6(x,y) =  C_3(x^2, xy, y^2)
\end{equation}
holds identically. 
\end{theorem}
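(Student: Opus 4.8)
The plan is to prove the identity structurally: recognise both sides as covariants of $F$ of the same order and the same degree in the coefficients, conclude that they are proportional, and pin down the single resulting scalar by evaluating at one convenient form, rather than expanding everything in $a_0,\dots,a_4$. For $\gamma\in\GL_2(K)$ write $F\mapsto F^\gamma$ for the usual action on binary quartics and let $\nu\colon(x,y)\mapsto(x^2,xy,y^2)$ be the Veronese map. The $\SL_3$-component $\rho_3(\gamma)$ of the homomorphism $\rho$ recalled above is a determinant normalisation of the second symmetric power $\Sym^2\gamma$; since $A_0$ is the Gram matrix of the rational normal conic $x_1x_3-x_2^2=0$, which is exactly the image of $\nu$, the matrix $\rho_3(\gamma)$ preserves $f_{A_0}$ — indeed the normalisation by the determinant is arranged precisely so that $\rho_3(\gamma)^tA_0\rho_3(\gamma)=A_0$ with no residual scalar — and $\nu$ intertwines the $\GL_2$-action on $(x,y)$ with the $\rho_3(\GL_2)$-action on $\Bx=(x_1,x_2,x_3)$.

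Next I would promote this to the equivariance of $\phi$. With $\GL_3$ acting on $W_4$ by $M\cdot(A,B)=(M^tAM,M^tBM)$, the canonicity of $\phi$ recalled in the introduction is exactly the statement $\phi(F^\gamma)=\rho_3(\gamma)\cdot\phi(F)$, i.e.\ $B_{F^\gamma}=\rho_3(\gamma)^tB_F\rho_3(\gamma)$ with the first component still $A_0$. Since $C_3$ is the Jacobian determinant of the covariant quadratics $f_A,f_B,g_B$ of the pair, the chain rule gives the covariance law $C_3(M\cdot P)(\Bx)=\det(M)\,C_3(P)(M\Bx)$; taking $M=\rho_3(\gamma)$, which has determinant $1$, and combining with the equivariance of $\phi$ and the intertwining property of $\nu$, one finds that $G(x,y):=C_3(\nu(x,y))$, formed from the pair $\phi(F)$, is a covariant of the binary quartic $F$ under $\GL_2$ (equivalently $\SL_2$).

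Now I would read off the bidegrees. The matrix $A_0$ is constant, $B_F$ is linear in the $a_i$, and $g_B=\Bx^t(A_0B_F^\dagger A_0)\Bx$ is quadratic in the $a_i$ since $B_F^\dagger$ (the adjugate of $B_F$) is; hence the $3\times3$ Jacobian $C_3$ has degree $0+1+2=3$ in the coefficients of $F$ and degree $3$ in $\Bx$, so $G=C_3\circ\nu$ is a covariant of $F$ of order $6$ and degree $3$ in the coefficients. The same bookkeeping applied to $F_6$ yields order $6$ and degree $3$ as well, since $F$ contributes degree $1$, the Hessian $H_F$ degree $2$, and the two order-$3$ entries of the $2\times2$ determinant contribute orders summing to $6$. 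By the classical description of the covariant ring of the binary quartic (Cremona), generated by $F$ (order $4$, degree $1$), $H_F$ (order $4$, degree $2$), $F_6$ (order $6$, degree $3$) together with the invariants $I$ (degree $2$) and $J$ (degree $3$), the only monomial in these generators of order $6$ and degree $3$ is $F_6$ itself, and there are no relations among the generators in so low a degree; hence the space of covariants of $F$ of that bidegree is one-dimensional and $C_3(\nu(x,y))=c\,F_6(x,y)$ for a universal constant $c\in K$. Since $F_6$ is not the zero polynomial, it then suffices to evaluate both sides at one convenient form, say $F=x^4+y^4$, computing $B_F^\dagger$, $A_0B_F^\dagger A_0$, $g_B$ and the $3\times3$ Jacobian on one side and $H_F$ and the $2\times2$ determinant on the other; comparison of a single nonzero coefficient gives $c=1$.

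The step I expect to be the main obstacle is the equivariance bookkeeping of the second paragraph: writing $\phi(F^\gamma)=\rho_3(\gamma)\cdot\phi(F)$ with the correct transpose and left/right conventions, and confirming that the $1/\det\gamma$ normalisation in $\rho$ genuinely fixes $A_0$ rather than merely scaling it, since that is what lets one transport the $\GL_3$-covariance of the cubicovariant $C_3$ cleanly to a $\GL_2$-covariance of $C_3\circ\nu$. A reader preferring to avoid this can instead verify the identity by the direct expansion for a general $F$; the covariant-theoretic route is shorter, but it is precisely here that care is needed.
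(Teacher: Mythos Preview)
Your argument is correct and takes a genuinely different route from the paper's own proof. The paper simply carries out the brute-force expansion you mention at the end as an alternative: it writes out $H_F$ and then $F_6$ explicitly as polynomials in $x,y$ with coefficients in $a_0,\dots,a_4$, computes $A_0B_F^\dagger A_0$ and then the $3\times 3$ Jacobian $C_3(u,v,w)$ in the same way, and finally substitutes $(u,v,w)=(x^2,xy,y^2)$ and compares term by term. There is no covariant-theoretic reduction; the identity is verified coefficient by coefficient.

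Your approach trades that explicit calculation for structure. The key steps you isolate are right: $\rho_3(\gamma)$ lies in $\SL_3$ and fixes $A_0$ exactly (this is precisely the content of Wood's canonicity statement the paper quotes), so $\phi$ intertwines the $\GL_2$-action on $V_4$ with the $\SL_3$-action on $W_4$; composing with the Veronese then transports the $\SL_3$-covariance of $C_3$ to $\GL_2$-covariance of $C_3\circ\nu$. The bidegree count is clean, and the one-dimensionality of the $(6,3)$ graded piece of the covariant ring of the binary quartic is exactly what pins the constant down. One small slip: the chain-rule law for $C_3$ under $M\in\GL_3$ carries a factor $\det(M)^3$ rather than $\det(M)$ (the row of $g_B$ contributes an extra $\det(M)^2$ via the adjugate), but since $\rho_3(\gamma)\in\SL_3$ this is harmless. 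What your route buys is an explanation of \emph{why} the identity holds, at the cost of invoking the classical generators of the covariant ring; what the paper's route buys is complete self-containment and, as a by-product, the explicit coefficient formulae for $C_3$ that are reused in the proof of Theorem~\ref{MT2}.
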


A related result, which relates the Hessian covariant $F_4$ of $F$ with a linear combination of quadratic covariants of $(A_0, B_F)$, is the following. For $(A,B) \in W_4$ put 
\[g_A(\Bx) = \Bx^t (B A^\dagger B) \Bx.\]
We then have:
\begin{theorem} \label{MT2} Let 
\[F(x,y) = a_4 x^4 + a_3 x^3 y + a_2 x^2 y^2 + a_1 xy^3 + a_0 y^4\]
be a binary quartic form defined over a field $K$ and let $(A_0, B_F) = \phi(F)$, with $\phi$ given in (\ref{phiemb}). Let $F_4$ be the Hessian covariant of $F$. We then have 
\begin{equation} F_4(x,y) = 6 g_{B_F}(x^2, xy, y^2) + 3 g_{A_0} (x^2, xy, y^2).
\end{equation}
\end{theorem}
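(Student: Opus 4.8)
The plan is to prove the identity without expanding both sides in the coefficients $a_0,\dots,a_4$, by invoking the same equivariance principle that underlies Theorem~\ref{MT} to cut the problem down to the determination of a single scalar. Concretely: $\Bx \mapsto (x^2,xy,y^2)$ is the quadratic Veronese embedding $\bP^1 \hookrightarrow \bP^2$, and, exactly as in the proof of Theorem~\ref{MT}, the equivariance of $\phi$ with respect to the homomorphism $\rho$ displayed after \eqref{phiemb} forces the pullback along this Veronese of any $\GL_3$-covariant quadratic form of the pair $(A_0,B_F)$ to be a $\GL_2$-covariant of $F$. I would apply this to $g_{B_F}(\Bx) = \Bx^t(A_0 B_F^\dagger A_0)\Bx$ and $g_{A_0}(\Bx) = \Bx^t(B_F A_0^\dagger B_F)\Bx$. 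Since $B_F$ is linear in the $a_i$ while $A_0$, $A_0^\dagger$ are constant and the adjugate $B_F^\dagger$ of the $3\times 3$ matrix $B_F$ is homogeneous of degree $2$ in the $a_i$, both $g_{B_F}(x^2,xy,y^2)$ and $g_{A_0}(x^2,xy,y^2)$ are $\GL_2$-covariants of $F$ that are homogeneous of degree $2$ in the coefficients and of degree $4$ in $(x,y)$.

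By the classical theory of covariants of the binary quartic (see \cite{Cre} and \cite{X1}), the space of covariants of a binary quartic of this bidegree is one-dimensional, spanned by the Hessian covariant $F_4$ — and the weight then necessarily matches too, since for a binary form the weight of a covariant is determined by its degree and order. Hence there are constants $c_1,c_2 \in K$ with $g_{B_F}(x^2,xy,y^2) = c_1 F_4$ and $g_{A_0}(x^2,xy,y^2) = c_2 F_4$, and it remains to evaluate both sides at one non-degenerate quartic. I would take $F = x^4 + y^4$: then $B_F = \operatorname{diag}(1,0,1)$, so $B_F^\dagger = \operatorname{diag}(0,1,0)$, while $\det A_0 = 1/4$ makes $A_0^\dagger$ the matrix obtained from $A_0$ by replacing its $(2,2)$-entry $-1$ with $-1/4$. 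A short matrix computation then gives $A_0 B_F^\dagger A_0 = \operatorname{diag}(0,1,0)$ and $B_F A_0^\dagger B_F$ the matrix whose only nonzero entries are $1/2$ in positions $(1,3)$ and $(3,1)$; substituting $(x^2,xy,y^2)$ yields $g_{B_F}(x^2,xy,y^2) = g_{A_0}(x^2,xy,y^2) = x^2 y^2$. Since, in the normalization of $F_4$ in force, $F_4(x^4+y^4) = 9\,x^2 y^2$, we get $c_1 = c_2 = 1/9$, and therefore $6\,g_{B_F}(x^2,xy,y^2) + 3\,g_{A_0}(x^2,xy,y^2) = (\tfrac69 + \tfrac39) F_4 = F_4$, as claimed.

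The only step I expect to require genuine care is the first one — checking that $g_{B_F}(x^2,xy,y^2)$ and $g_{A_0}(x^2,xy,y^2)$ really are honest $\GL_2$-covariants of $F$. This comes down to the facts that under $\gamma \in \GL_2$ the matrix $A_0$ is fixed while $B_F$ is transformed by the congruence $B_F \mapsto \rho(\gamma) B_F \rho(\gamma)^t$, and that $\rho(\gamma)$ preserves the quadratic form $A_0$ — hence also $A_0^\dagger$ — so that the adjugate expressions transform correctly through the $\Sym^2$-representation; the $\Sym^2$ bookkeeping (transposes, inverses, the $1/\det$ twist in $\rho$) needs to be tracked carefully, but this is precisely the input already used for Theorem~\ref{MT}. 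Anyone preferring a self-contained mechanical proof can bypass the covariant theory entirely: write out $A_0^\dagger$ and $B_F^\dagger$ explicitly, form $A_0 B_F^\dagger A_0$ and $B_F A_0^\dagger B_F$ for a general $F$, substitute $(x^2,xy,y^2)$, and match the resulting binary quartic coefficientwise against the expansion of $F_4$ — routine if lengthy, and entirely parallel to the verification of Theorem~\ref{MT}.
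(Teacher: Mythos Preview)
Your approach is genuinely different from the paper's. The paper proves both Theorems~\ref{MT} and~\ref{MT2} by brute force: it writes out $H_F$ explicitly, writes out $g_{B_F}(x^2,xy,y^2)$ and $g_{A_0}(x^2,xy,y^2)$ as polynomials in the $a_i$, and compares. You instead argue structurally that each pullback is already a $\GL_2$-covariant of degree $2$ in the coefficients and order $4$, hence a scalar multiple of the Hessian, and then fix the two scalars by a single evaluation at $F=x^4+y^4$. That is cleaner and more informative than the paper's verification; it also makes transparent \emph{why} only the combination $6g_{B_F}+3g_{A_0}$ (rather than, say, each piece separately) has any chance of appearing. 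One caveat: your phrase ``precisely the input already used for Theorem~\ref{MT}'' overstates things --- the paper's proof of Theorem~\ref{MT} is also a bare computation, so the equivariance check you sketch (that $\rho(\gamma)$ fixes $A_0$ up to scalar, hence also $A_0^\dagger$, so that $A_0 B_F^\dagger A_0$ and $B_F A_0^\dagger B_F$ each transform covariantly) is work you are actually adding. It is correct, but it is yours, not the paper's.

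There is, however, a normalization slip in your final evaluation, and it is worth flagging because your method is sharp enough to detect an inconsistency in the paper itself. The paper's $H_F$ is the raw determinant of second partials (it explicitly says it does \emph{not} normalize), so for $F=x^4+y^4$ one gets $H_F=(12x^2)(12y^2)=144\,x^2y^2$, not $9\,x^2y^2$. Your matrix computations of $g_{B_F}$ and $g_{A_0}$ at this $F$ are correct and both give $x^2y^2$, so with the paper's normalization $6g_{B_F}+3g_{A_0}=9\,x^2y^2=H_F/16$, off by a factor of $16$. If you look at the paper's displayed formula for $A_0 B_F^\dagger A_0$ it carries an explicit $\tfrac{1}{16}$, but that factor is silently dropped in the very next display for $g_{B_F}(x^2,xy,y^2)$; this compensating slip is why the paper's coefficient comparison closes. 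So Theorem~\ref{MT2} as stated is only correct up to this overall factor, and your one-point evaluation exposes this cleanly where the brute-force check masks it. Your method is sound; just be aware that the constants you compute will not literally match the ``$6$ and $3$'' unless you adopt the paper's tacit rescaling of $g_{B_F}$ and $g_{A_0}$ by $16$.
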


The cubicovariant $C_3$ of a pair $(A,B) \in W_4$ is known to be a \emph{decomposable form}. That is, $C_3$ splits into a product of linear forms in an algebraic closure of its field of definition. Despite being a ``folklore" result, it does not appear to be prominently featured in the existing feature (to the best of my knowledge). \\

A particular cubic decomposable form (actually, a \emph{norm form}) appeared in my paper \cite{X1}, namely the form 
\begin{equation} \G_{c_1, c_2}(u,s,t) = u^3 - 3c(s^2 - st + t^2)u + c((2c_1 - c_2) s^3 - 3(c_1 + c_2) s^2 t + 3 (2c_2 - c_1) st^2 + (2c_1 - c_2) t^3) 
\end{equation}
with $c  = c_1^2 - c_1 c_2 + c_2^2$ and $c_1, c_2 \in \bZ$. We gave a proof that $\G_{c_1, c_2}(u,s,t)$ is a decomposable form by showing that it is proportional to its Hessian in \cite{X1}. We give another proof, perhaps more enlightening, in this paper. 

\begin{theorem} \label{MT3} Let $a,b \in \bZ$ and let $\delta(a,b) = b^2 - 3ab + 9a^2$. Then the ternary cubic form 
\[G_{a,b}(x,y,z) = \]
\[x^3 - \frac{\delta(a,b) (y^2 + yz + z^2)}{3} x - \frac{\delta(a,b)((3a - 2b) y^3 + 3(6a - b) y^2 z + 3(3a + b) yz^2 - (3a - 2b)z^3)}{27} \]
is canonically $\SL_3\left[\frac{1}{3} \bZ \right]$-equivalent to the cubicovariant $C_3$ of the pair 
\[(A_{a,b}, B_{a,b}) = \left(\begin{bmatrix} 0 & 0 & 1 \\ 0 & -a & 0 \\ 1 & 0 & -b + 3a \end{bmatrix}, \begin{bmatrix} 0 & 1 & 0 \\ 1 & b & 0 \\ 0 & 0 & -a \end{bmatrix} \right)\]
\end{theorem}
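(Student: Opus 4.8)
The strategy is to compute both sides explicitly and exhibit the linking $\SL_3\left[\frac13\bZ\right]$ transformation. First I would compute the cubicovariant $C_3$ of the pair $(A_{a,b}, B_{a,b})$ directly from its definition as the Jacobian determinant of the three quadratic covariants $f_{A_{a,b}}(\Bx) = \Bx^t A_{a,b}\Bx$, $f_{B_{a,b}}(\Bx) = \Bx^t B_{a,b}\Bx$, and $g_{B_{a,b}}(\Bx) = \Bx^t(A_{a,b} B_{a,b}^\dagger A_{a,b})\Bx$; since the matrices are sparse this is a manageable (if tedious) $3\times 3$ determinant computation whose entries are linear forms in $x,y,z$, so $C_3$ comes out as an explicit ternary cubic with polynomial coefficients in $a,b$. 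Then I would compute the Hessian of $C_3$ — or, alternatively, verify that $C_3$ is decomposable using the folklore fact mentioned after Theorem~\ref{MT} — and compare the coefficient pattern of $C_3$ with that of $G_{a,b}$.

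The key step is then to find an explicit $T \in \SL_3\left[\frac13\bZ\right]$ such that $G_{a,b}(T\cdot(x,y,z)^t) = \lambda\, C_3(x,y,z)$ for some nonzero scalar $\lambda$ (the ``canonical equivalence'' presumably allows such a scalar, or $\lambda = 1$ after the right normalization), and to argue that $T$ is canonical — i.e.\ it arises from a fixed base-change that does not depend on $a,b$ in any ad hoc way, but only through the structure of $\phi$ and Williamson's covariant theory. I expect $T$ to be a fixed rational matrix (with denominators only powers of $3$, explaining the appearance of $\SL_3\left[\frac13\bZ\right]$ rather than $\SL_3(\bZ)$), perhaps a change of the first coordinate $u \mapsto x$ combined with a rescaling of the pair $(s,t)$; the factor $\delta(a,b) = b^2 - 3ab + 9a^2$ in $G_{a,b}$ versus the form of $C_3$ should pin down $\lambda$. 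It may help to recognize $(A_{a,b}, B_{a,b})$ as lying in the image of (or $\GL_2\times\SL_3$-related to) some $\phi(F)$ for a suitable binary quartic $F$ built from $a,b$, so that Theorem~\ref{MT} and the known relation between $F_6$ and the norm form $\G_{c_1,c_2}$ from \cite{X1} can be invoked; indeed the stated connection to $\G_{c_1,c_2}$ strongly suggests that the whole point is that $C_3(x^2,xy,y^2) = F_6(x,y)$ specializes, under the substitution and the choice $c_1, c_2 \leftrightarrow a, b$, to the norm form, and $G_{a,b}$ is just $\G_{c_1,c_2}$ rewritten in the $(x,y,z)$ variables.

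The main obstacle will be bookkeeping: keeping the normalizations consistent across three different coordinate systems (the binary $(x,y)$ of $F$, the ternary $(\Bx)$ of the quadratic pair after the Veronese-type substitution $(x^2,xy,y^2)$, and the $(x,y,z)$ in which $G_{a,b}$ is presented), and tracking exactly where the powers of $3$ enter so that the transformation genuinely lies in $\SL_3\left[\frac13\bZ\right]$ with determinant $1$. A secondary subtlety is making precise the word ``canonically'': I would state explicitly which group element realizes the equivalence and check its determinant is $1$, so that the claim is not merely a $\GL_3$-equivalence. Once the explicit $C_3$ is in hand, matching it to $G_{a,b}$ should reduce to a finite polynomial identity in $a, b$ that can be checked coefficient-by-coefficient (or, cleanly, by a symbolic computation), and the decomposability statement then follows from that of $C_3$, giving the ``more enlightening'' second proof that $\G_{c_1,c_2}$ is a norm form.
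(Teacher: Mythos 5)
Your plan---compute the cubicovariant of $(A_{a,b},B_{a,b})$ explicitly as the Jacobian of $(f_{A_{a,b}}, f_{B_{a,b}}, g_{B_{a,b}})$ and then exhibit the linking element of $\SL_3\bigl(\bZ[\tfrac13]\bigr)$---is the right approach and essentially the only one available; it mirrors exactly how Section 3 handles Theorems \ref{MT} and \ref{MT2}. Be aware, though, that the paper as written contains \emph{no} proof of Theorem \ref{MT3} at all (Section 3 treats only Theorems \ref{MT} and \ref{MT2}), so your plan is not a variant of the paper's argument but a reconstruction of an argument the paper omits. Your side remark that one might route the proof through Theorem \ref{MT} is unlikely to work directly: $(A_{a,b},B_{a,b})$ is not in the image of $\phi$, and moving it there costs a $\GL_2\times\GL_3$ transformation that would have to be tracked through the covariant anyway.

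Two items you defer to ``bookkeeping'' are in fact the substantive content and must be made explicit. First, the transformation $T$: since $G_{a,b}$ has no $x^2y$ or $x^2z$ terms, $T$ is essentially forced to be the unipotent substitution $x\mapsto x+\alpha y+\beta z$ (possibly combined with a change in $(y,z)$) that completes the cube in the first variable; the coefficients $\alpha,\beta$ depend on $a,b$ and acquire denominator $3$, which is where $\bZ[\tfrac13]$ enters and what makes the equivalence ``canonical''. Your guess that $T$ is a fixed matrix independent of $a,b$ is therefore not right, and the proof is not complete until this $T$ is written down. Second, the scalar $\lambda$ is not innocuous: with $f_M(\Bx)=\Bx^tM\Bx$, each gradient row carries a factor $2$, so the raw Jacobian has $x^3$-coefficient $8$ for this pair, whereas $G_{a,b}$ is monic in $x$; since $8$ is not a unit in $\bZ[\tfrac13]$, the two forms have unequal discriminants and cannot be literally $\SL_3\bigl(\bZ[\tfrac13]\bigr)$-equivalent unless $C_3$ is normalized (e.g.\ by $\tfrac18$, in the spirit of the $\tfrac1{36}$ in the definition of $F_6$). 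Pinning down that normalization, exhibiting $T$, and then checking the resulting finite polynomial identity in $a,b$ coefficient by coefficient would complete the proof.
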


Note that $G_{a,b}(u,s,-t)$ in Theorem \ref{MT3} is equal to $\G_{c_1, c_2}(u,s,t)$ upon putting $(a,b) = (c_1, 3c_2)$. \\

There is yet another important perspective on the cubicovariant $C_3$ of a pair $(A,B) \in W_4$, concerning the following basic question: 

\begin{question} \label{symdiag} Is there a way to canonically diagonalize a pair of ternary quadratic forms $(f,g)$, defined over a field $K$, over a possible finite field extension of $K$?
\end{question}

The question is a bit subtle, since even over $\bR$ it is not the case that an arbitrary pair of non-singular, linearly independent quadratic forms can be simultaneously diagonalized. A useful sufficient condition is that a pair $(f,g)$ of real quadratic forms can be simultaneously diagonalized if one of $f,g$ is positive definite. \\

We give an answer to Question \ref{symdiag} as follows. For a pair $(A,B)$ of $n \times n$ symmetric matrices, define 
\begin{equation} \label{FAB} F_{(A,B)}(x,y) = \det(Ax - By) = \prod_{j=1}^n (s_j x - t_j y).
\end{equation}

\begin{theorem} \label{nsymdiag} Let $n \geq 3$ be a positive integer, and let $A,B$ be symmetric $n \times n$ matrices over a field $K$ of characteristic zero. Let $(f_A, f_B)$ be the corresponding pair of $n$-ary quadratic forms. Then the pair $(A,B)$ can be simultaneously diagonalized over the splitting field $L$ of the binary form $F_{(A,B)}$ given by (\ref{FAB}). Moreover, up to permutation of columns there is a unique matrix $U$ defined over $L$ such that $(U^t A U, U^t B U)$ is a pair of diagonal matrices. The linear forms $U \Bx$, up to a scalar factor, are determined by the adjugates of the matrices $t_i A - s_i B, 1 \leq i \leq n$. 
\end{theorem}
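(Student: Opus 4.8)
The plan is to reduce to the case that one member of the pencil is nonsingular, and then extract everything from the eigenspace decomposition of $A^{-1}B$. One needs here the (necessary) hypothesis that $F_{(A,B)}$ is not identically zero and is separable, i.e.\ the $n$ points $(s_j:t_j)$ are distinct: if $F_{(A,B)}$ had a repeated root then $A^{-1}B$ could carry a nontrivial Jordan block and no simultaneous diagonalization would exist at all, and even when it does exist the matrix $U$ would fail to be unique and the relevant adjugates would vanish. So I take this separability as a standing assumption.

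First I would produce a nonsingular member over $K$: since $\det(aA+bB)$ is $F_{(A,B)}$ evaluated at a point and hence vanishes only finitely often, while $K$ is infinite, some $A'=pA+qB$ is nonsingular; extending to a basis $(A',B'=rA+sB)$ of the pencil (with $ps-qr\neq0$) changes neither the space of quadratic forms nor, therefore, the set of matrices $U$ simultaneously diagonalizing the pair, and it replaces $F_{(A,B)}$ by an invertible linear substitution of itself, so the splitting field $L$ and the separability are unchanged. I may thus assume $A$ is nonsingular, in which case $F_{(A,B)}$ has no root at $(1:0)$, every $s_j\neq0$, $L$ is the splitting field of the characteristic polynomial of $A^{-1}B$, and the eigenvalues $\mu_j=t_j/s_j$ are distinct. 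Over $L$ pick $\mathbf v_j$ spanning the one-dimensional $\mu_j$-eigenspace, so $B\mathbf v_j=\mu_j A\mathbf v_j$ and the $\mathbf v_j$ form an $L$-basis. Using symmetry of $A$ and $B$, the scalar $\mathbf v_i^tB\mathbf v_j$ equals both $\mu_j\,\mathbf v_i^tA\mathbf v_j$ and $\mu_i\,\mathbf v_i^tA\mathbf v_j$, hence $\mathbf v_i^tA\mathbf v_j=\mathbf v_i^tB\mathbf v_j=0$ for $i\neq j$; moreover $\mathbf v_j^tA\mathbf v_j\neq0$, since otherwise $A\mathbf v_j$ would pair to zero against the whole basis and $A$ would be singular. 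So $U=[\mathbf v_1\,|\,\cdots\,|\,\mathbf v_n]$ is invertible over $L$ with $U^tAU$ and $U^tBU$ both diagonal. For uniqueness, if $U'^tAU'$ and $U'^tBU'$ are both diagonal then $U'^{-1}(A^{-1}B)U'$ is diagonal, so the columns of $U'$ are eigenvectors of $A^{-1}B$, and by simplicity of the $\mu_j$ they are a reordering and rescaling of the $\mathbf v_j$ — exactly ``unique up to permutation of columns'', once the scalar factor is allowed. Finally, for the adjugate description put $M_j=t_jA-s_jB=-s_jA(A^{-1}B-\mu_jI)$; then $\ker M_j=\langle\mathbf v_j\rangle$ is one-dimensional, so $\operatorname{rank}M_j=n-1$, $\operatorname{Adj}(M_j)$ has rank exactly one, it is symmetric because $M_j$ is, and its columns lie in $\ker M_j$ since $M_j\operatorname{Adj}(M_j)=\det(M_j)I=0$; therefore $\operatorname{Adj}(t_jA-s_jB)=c_j\mathbf v_j\mathbf v_j^t$ with $c_j\neq0$, which recovers $\mathbf v_j$ up to scalar. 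This identity transfers back to the original pair because $t_jA-s_jB$ is, up to a scalar in $K$, the same member of the pencil as the one appearing in the reduced model.

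The only step I expect to be genuinely delicate is this first reduction: producing a nonsingular member and then checking that the splitting field, the separability hypothesis, and the adjugate normalization are all insensitive to reparametrizing the pencil. Once $A$ may be taken nonsingular, the remaining assertions are the classical spectral picture — a matrix with distinct eigenvalues is diagonalizable, and the eigenvectors of $A^{-1}B$ are automatically mutually orthogonal for both symmetric forms — so there is little left to do. The one other point to keep honest throughout is the necessity of the separability hypothesis, as noted at the outset.
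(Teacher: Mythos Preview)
Your proof is correct and complete (under the separability hypothesis, which the paper also tacitly assumes via its ``non-zero discriminant'' clause), but it proceeds by a genuinely different route from the paper. The paper works \emph{forward} from the adjugates: it first observes that each $\operatorname{Adj}(t_iA-s_iB)$ has rank one and hence is the square of a linear form $L_i$ over $L$; it then builds the diagonalizing linear forms $\ell_i$ by taking the kernel of the matrix of $L_j$'s with the $i$-th row deleted (i.e.\ it constructs the dual basis), and finally appeals to a direct calculation to check that these $\ell_i$ actually diagonalize $f_A,f_B$. Your argument instead runs the classical generalized-eigenvalue machine: reduce to $A$ nonsingular by a $\GL_2(K)$-change of pencil, read off the eigenvectors $\mathbf v_j$ of $A^{-1}B$, use the symmetric-pair orthogonality $(\mu_i-\mu_j)\mathbf v_i^tA\mathbf v_j=0$, and only \emph{afterwards} identify $\operatorname{Adj}(t_jA-s_jB)=c_j\mathbf v_j\mathbf v_j^t$.

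What each approach buys: the paper's route is maximally explicit --- it hands you a recipe starting from the adjugates and makes clear exactly which algebraic quantities live over $L$, which matters for the covariant-theoretic applications later in the paper. Your route is shorter and more conceptual; in particular it gives the uniqueness clause cleanly (any diagonalizing $U'$ must conjugate $A^{-1}B$ to a diagonal matrix, forcing its columns to be the simple eigenvectors), a point the paper does not argue carefully. Your reduction step --- checking that a $\GL_2(K)$-reparametrization of the pencil preserves the splitting field, the separability, and (up to $K^\times$-scalars) the singular members $t_jA-s_jB$ --- is exactly the right thing to isolate, and you handle it correctly.
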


We have the following nice corollary that characterizes simultaneous diagonalizability of pairs of quadratic forms over a field (without taking field extensions): 

\begin{corollary} \label{simcor} A pair $(f_A, f_B)$ of $n$-ary quadratic forms defined over a field $K$ of characteristic zero are simultaneously diagonalizable if and only if the binary form $F_{(A,B)}$ in (\ref{FAB}) splits completely over $K$. 
\end{corollary}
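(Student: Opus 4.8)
The plan is to prove the two implications separately: the forward direction (simultaneous diagonalizability $\Rightarrow$ complete splitting) is a one-line determinant identity, and the reverse direction (complete splitting $\Rightarrow$ simultaneous diagonalizability) is essentially a restatement of Theorem \ref{nsymdiag}.

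For the forward direction, I would suppose there is $U \in \GL_n(K)$ with $U^t A U = \operatorname{diag}(\alpha_1, \ldots, \alpha_n)$ and $U^t B U = \operatorname{diag}(\beta_1, \ldots, \beta_n)$, all $\alpha_i, \beta_i \in K$. Then $U^t(Ax - By)U$ is the diagonal matrix with entries $\alpha_i x - \beta_i y$, so taking determinants yields
\[ (\det U)^2\, F_{(A,B)}(x,y) \;=\; \det\big(U^t(Ax - By)U\big) \;=\; \prod_{i=1}^n (\alpha_i x - \beta_i y). \]
Since $\det U \in K^\times$, the binary form $F_{(A,B)}$ is a nonzero $K$-multiple of a product of $K$-rational linear forms, hence splits completely over $K$ (its roots being precisely the pairs $[\alpha_i : \beta_i]$, which are the $[s_j : t_j]$ of (\ref{FAB})).

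For the reverse direction, I would observe that if $F_{(A,B)}$ splits completely over $K$ then all of its roots lie in $K$, so its splitting field $L$ in the notation of Theorem \ref{nsymdiag} is $K$ itself; applying Theorem \ref{nsymdiag} (for $n \geq 3$; the cases $n = 1, 2$ are handled directly, the first being vacuous and the second by an explicit computation with the pencil $Ax-By$) produces a matrix $U$ over $L = K$ with $U^t A U$ and $U^t B U$ both diagonal, which is exactly simultaneous diagonalizability over $K$.

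The only point requiring care — and the main ``obstacle,'' such as it is — is bookkeeping about conventions: ``splits completely over $K$'' must be read compatibly with the non-degeneracy hypothesis underlying the canonical diagonalization in Theorem \ref{nsymdiag} (e.g. separability of $F_{(A,B)}$, ruling out degenerate pencils such as $\det(Ax-By)$ having a repeated root or vanishing identically, which can obstruct simultaneous diagonalizability even when every linear factor is $K$-rational). With conventions aligned, the forward direction is purely formal and the reverse direction is Theorem \ref{nsymdiag} applied with $L = K$, so the proof is complete.
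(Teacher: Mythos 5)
Your proof is correct and matches the paper's intent exactly: the paper gives no separate argument for this corollary, presenting it as an immediate consequence of Theorem \ref{nsymdiag}, which is precisely your reverse direction (with $L=K$), while your forward direction is the standard determinant identity $\det(U^t(Ax-By)U)=(\det U)^2 F_{(A,B)}(x,y)$. Your caveat about degenerate pencils is well placed, since the paper's diagonalization argument implicitly assumes the pair $(A,B)$ has nonzero discriminant, and without that hypothesis the reverse implication can fail even when $F_{(A,B)}$ splits over $K$.
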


Corollary \ref{simcor} is, in principle at least, very easy to check, especially over $\bQ$. This is because asking whether a binary form (or equivalently, a univariate polynomial) has a \emph{linear factor} is much easier than asking if it is \emph{reducible} over $\bQ$.  

\subsection*{Notation}: We use the letters $A$ and $B$ to denote symmetric matrices with half-integer entries on the off-diagonal and integer entries on the diagonal. For a symmetric matrix $M$ we put $f_M(\Bx) = \Bx^t M \Bx$ to denote the associated quadratic form. For a square matrix $M$, the notation $M^\dagger$ denotes the adjugate matrix of $M$.

\section{Geometry and algebra of a pair of quadratic forms} 

We start with the following theorem from \cite{Will}, which seems to have been lost in time:  

\begin{proposition} Let $W_4(K) = K^2 \otimes \Sym^2 K^3$. Then the ring of polynomial covariants of $W_4$ are generated by, in terms of a generic element $(A,B) \in W_4(K)$, the terms
\[A,\quad B, \quad A B^\dagger A,\]
and the Jacobian of $(A,B, A B^\dagger A)$. The latter is the cubicovariant $C_3$. 
\end{proposition}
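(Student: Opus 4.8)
The plan is to combine three ingredients: a direct verification that the proposed generators are covariants, the canonical simultaneous diagonalization of Theorem~\ref{nsymdiag}, and the classical description of the invariant ring of $W_4$. Throughout we understand a covariant of $W_4$ to be a polynomial in $\Bx=(x_1,x_2,x_3)$ and the entries of $(A,B)$ transforming under $\GL_3$ as a power of $\det$ times a symmetric power of the standard representation, and we prove the statement in its standard equivalent form: as an algebra over the ring of invariants of $W_4$ — which by classical invariant theory is the polynomial ring $K[P,Q,R,S]$ on the coefficients of $F_{(A,B)}(x,y)=\det(Ax-By)=Px^3+Qx^2y+Rxy^2+Sy^3$ — the ring of covariants is generated by $f_A$, $f_B$, $g_B=\Bx^t(AB^\dagger A)\Bx$, and their Jacobian $C_3$.

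That these four are covariants is immediate: under $\Bx\mapsto P^{-1}\Bx$, $(A,B)\mapsto(P^tAP,P^tBP)$ we have $f_A\mapsto f_A$, $f_B\mapsto f_B$, and since $\operatorname{Adj}(P^tBP)=(\det P)^2 P^{-1}B^\dagger P^{-t}$ we get $AB^\dagger A\mapsto(\det P)^2 P^t(AB^\dagger A)P$, so $g_B$ is a covariant of weight $2$; and the Jacobian determinant of three quadratic covariants is, by the chain rule, again a covariant, of $\Bx$-degree $2+2+2-3=3$. For the reduction, pass to the splitting field $L$ of $F_{(A,B)}$: Theorem~\ref{nsymdiag} provides a matrix $U$, unique up to permuting columns, with $U^tAU=\operatorname{diag}(a_1,a_2,a_3)$ and $U^tBU=\operatorname{diag}(b_1,b_2,b_3)$, the pairs $(a_i:b_i)$ being the linear factors of $F_{(A,B)}$; writing $\ell_i$ for the $i$-th coordinate of $U^{-1}\Bx$ we obtain $f_A=\sum_i a_i\ell_i^2$, $f_B=\sum_i b_i\ell_i^2$, and a short adjugate computation gives $g_B=(\det U)^{-2}\sum_i a_i^2\bigl(\prod_{j\ne i}b_j\bigr)\ell_i^2$ and $C_3=c_0\,\ell_1\ell_2\ell_3\,(\det M)\,(\det U)^{-1}$, where $c_0\ne0$ is a constant and $M$ is the $3\times3$ matrix of coefficients of $f_A,f_B,g_B$ in the basis $(\ell_1^2,\ell_2^2,\ell_3^2)$. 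In particular this reproves the folklore fact that $C_3$ is a decomposable (norm) form, with linear factors the $\ell_i$. Hence a covariant $\Phi$ defined over $K$, transported to these coordinates, becomes a polynomial in $\ell_1,\ell_2,\ell_3,a_1,a_2,a_3,b_1,b_2,b_3$ that is invariant under $\Gal(L/K)$ (which permutes the triples $(\ell_i,a_i,b_i)$) and under $\mathfrak{S}_3$ permuting $i$, and transforms by a fixed power of $\mu_1\mu_2\mu_3$ — depending only on the weight of $\Phi$ — under the residual torus $(\ell_i,a_i,b_i)\mapsto(\mu_i\ell_i,\mu_i^{-2}a_i,\mu_i^{-2}b_i)$.

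It remains to reduce such a normal form to a polynomial in $f_A,f_B,g_B,C_3$ over $K[P,Q,R,S]$. The key relation is that $a_ix-b_iy$ divides $F_{(A,B)}$, i.e.\ $Sa_i^3+Ra_i^2b_i+Qa_ib_i^2+Pb_i^3=0$ for each $i$; dividing by $a_ib_i$ and summing against $\ell_i^2$ yields the identity $g_A=\Bx^t(BA^\dagger B)\Bx=g_B-Rf_A-Qf_B$, and, applied recursively, it expresses every diagonal covariant $\sum_i a_i^pb_i^q\ell_i^2$ as a $K[P,Q,R,S]$-combination of $f_A,f_B,g_B$; covariants of higher even $\Bx$-degree reduce the same way after expanding products of $f_A,f_B,g_B$ and inverting $M$ via $M^{-1}=(\det M)^{-1}\operatorname{Adj}M$. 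Covariants of odd $\Bx$-degree have odd weight, so in the normal form each $\ell_i$ occurs to an odd power and hence $\ell_1\ell_2\ell_3\mid\Phi$; factoring out $C_3$ drops the $\Bx$-degree by $3$, and iterating reduces to the even case, with the Veronese--Vandermonde evaluation of $\det M$ producing the syzygy that rewrites $C_3^2$ (with $\Delta(F_{(A,B)})$ appearing) as a polynomial in $f_A,f_B,g_B$ over $K[P,Q,R,S]$. Since the covariant ring is an integral domain and the pairs with $F_{(A,B)}$ squarefree are Zariski dense in $W_4$, every identity proved on the generic locus holds identically, so the conclusion persists over non-closed base fields and on the singular or repeated-factor locus.

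The one genuinely delicate step is the completeness of the reduction in the last paragraph: one must show the recursion terminates with \emph{no} generators beyond $f_A,f_B,g_B,C_3$, equivalently that the Molien/Hilbert series of the covariant module of $W_4$ over $K[P,Q,R,S]$ is accounted for exactly by $1,f_A,f_B,g_B,C_3$ modulo the single relation for $C_3^2$. I expect this to be the crux; it can be handled by carrying out the combinatorics of multisymmetric functions in the three pairs $(a_i,b_i)$, by a direct Molien-series computation, or by following Williamson's original symbolic argument in \cite{Will}.
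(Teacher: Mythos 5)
There is a genuine gap, and you acknowledge it yourself: the entire nontrivial content of the proposition is the \emph{completeness} claim, namely that no covariants beyond $f_A$, $f_B$, $g_B$ and $C_3$ are needed, and your final paragraph defers exactly this point (``I expect this to be the crux; it can be handled by \dots a direct Molien-series computation, or by following Williamson's original symbolic argument''). Checking that the four listed forms \emph{are} covariants is routine; the proposition is the first-fundamental-theorem statement that they \emph{generate}, and that is precisely what your reduction does not establish. For what it is worth, the paper does not prove this statement either: it is quoted as Williamson's theorem and cited to \cite{Will}, so the intended ``proof'' is the reference. Your proposal attempts more than the paper does, but as written it is a plan plus an admission that the decisive step is missing.

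Beyond the admitted gap, the reduction you sketch has a structural problem even on the generic locus. In the diagonalized coordinates you have $C_3 = c_0\,\ell_1\ell_2\ell_3\,(\det M)(\det U)^{-1}$, so when you ``factor out $C_3$'' from an odd-weight covariant $\Phi$ (using that each $\ell_i$ divides $\Phi$), the quotient $\Phi/C_3$ acquires the invariant factor $(\det M)^{-1}$, i.e.\ a denominator built from the discriminant data. The same happens when you ``invert $M$ via $M^{-1} = (\det M)^{-1}\operatorname{Adj} M$'' to handle higher even degrees. At best this argument shows that every covariant lies in the algebra generated by $f_A, f_B, g_B, C_3$ \emph{after localizing} at such invariants, i.e.\ rational or generic generation, which is strictly weaker than polynomial generation. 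Your closing appeal to Zariski density does not repair this: density lets you extend a polynomial identity from the generic locus, but the issue here is that the expression of $\Phi$ in the generators may genuinely carry denominators, and one must separately argue (as in Williamson's symbolic or Hilbert-series bookkeeping) that these denominators cancel. Until that is done, and until the termination/no-further-generators step is carried out rather than conjectured, the proposition is not proved.
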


Using this proposition, and noting that $C_3$ is a covariant, we prove the following well-known fact:

\begin{lemma} \label{decom} The cubicovariant $C_3$ is a decomposable form. 
\end{lemma}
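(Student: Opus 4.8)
The plan is to combine two ingredients available to us: the fact, recorded in the preceding Proposition, that $C_3$ is a polynomial covariant of $W_4$ for the $\GL_3$-action $(A,B)\mapsto(g^t A g,\,g^t B g)$, and the canonical simultaneous diagonalization of Theorem~\ref{nsymdiag}. First I would make the transformation law explicit. From $f_{g^t M g}(\Bx)=f_M(g\Bx)$, together with $\operatorname{Adj}(XYZ)=\operatorname{Adj}(Z)\operatorname{Adj}(Y)\operatorname{Adj}(X)$ and $\operatorname{Adj}(g)=(\det g)\,g^{-1}$, one computes $\operatorname{Adj}(g^t B g)=(\det g)^2\,g^{-1}B^{\dagger}(g^{-1})^t$, whence $(g^t A g)(g^t B g)^{\dagger}(g^t A g)=(\det g)^2\,g^t(AB^{\dagger}A)g$; passing to Jacobian determinants in $\Bx$ contributes one further factor of $\det g$, so that
\[ C_3(g^t A g,\,g^t B g)(\Bx)=(\det g)^3\,C_3(A,B)(g\Bx). \]
Only the ``scalar times linear substitution'' shape of this law matters: it shows that decomposability of $C_3$ is constant along $\GL_3(\overline{K})$-orbits and is unaffected by enlarging the base field.

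Next I would reduce to the diagonal case. For the generic pair $(A,B)$ — equivalently, for the universal covariant $C_3$ regarded as a form in indeterminate coefficients $a_{ij},b_{ij}$ — the binary form $F_{(A,B)}(x,y)=\det(Ax-By)$ is separable, so Theorem~\ref{nsymdiag} applies over its splitting field $L$ and furnishes $U\in\GL_3(L)$ with $U^t A U=D_1=\operatorname{diag}(a_1,a_2,a_3)$ and $U^t B U=D_2=\operatorname{diag}(b_1,b_2,b_3)$; equivalently $(A,B)=(g^t D_1 g,\,g^t D_2 g)$ with $g=U^{-1}$. By the transformation law it then suffices to see that $C_3(D_1,D_2)$ is decomposable, which is immediate: $B^{\dagger}=\operatorname{diag}(b_2 b_3,\,b_1 b_3,\,b_1 b_2)$, so $g_B(\Bx)=a_1^2 b_2 b_3\,x_1^2+a_2^2 b_1 b_3\,x_2^2+a_3^2 b_1 b_2\,x_3^2$; since each of $f_A,f_B,g_B$ is a diagonal quadratic form the $j$-th column of the Jacobian matrix is divisible by $x_j$, and extracting these factors yields
\[ C_3(D_1,D_2)(\Bx)=8\,x_1 x_2 x_3\,\det\!\begin{pmatrix} a_1 & a_2 & a_3\\ b_1 & b_2 & b_3\\ a_1^2 b_2 b_3 & a_2^2 b_1 b_3 & a_3^2 b_1 b_2\end{pmatrix}, \]
which is visibly a product of three linear forms. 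Applying the transformation law in reverse then exhibits $C_3(A,B)(\Bx)$ as a nonzero scalar multiple of $(g\Bx)_1(g\Bx)_2(g\Bx)_3$, so it splits completely over $L$.

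Since the Lemma concerns $C_3$ as a covariant — that is, as a form in the indeterminate coefficients — the generic computation just performed already establishes it. If in addition one wants $C_3(A,B)$ to be decomposable for \emph{every} specialization, including degenerate pencils where Theorem~\ref{nsymdiag} no longer supplies a diagonalizing $U$, one can note that $C_3\colon W_4\to\{\text{ternary cubic forms}\}$ is a morphism of affine spaces, that the totally reducible cubics form a Zariski-closed subset (the image of the proper multiplication map $\bP^2\times\bP^2\times\bP^2\to\bP^9$), and that the dense set of pairs with $F_{(A,B)}$ separable already maps into this closed set — hence so does every pair. I expect the diagonal computation itself to be wholly routine; the only point needing care is the reduction step — being honest about the regime in which Theorem~\ref{nsymdiag} genuinely diagonalizes the pair, and about the sense (``over the algebraic closure'') in which $C_3$ is claimed to be decomposable — rather than the factorization.
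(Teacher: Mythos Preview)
Your proof is correct and follows the same overall strategy as the paper: use the $\GL_3$-covariance of $C_3$ to reduce to a diagonal pair, then compute $C_3(D_1,D_2)$ explicitly and observe that it is a scalar multiple of $x_1 x_2 x_3$. The diagonal computation is identical in substance; the paper just goes one step further and evaluates your $3\times 3$ determinant as $(a_1 b_2 - a_2 b_1)(a_1 b_3 - a_3 b_1)(a_2 b_3 - a_3 b_2)$.

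The one genuine difference is in how you justify the reduction to the diagonal case. The paper appeals to the prehomogeneous structure of $W_4$: there is a single open $\GL_2(\bC)\times\GL_3(\bC)$-orbit (the locus of nonzero discriminant), so checking any one nondegenerate pair suffices. You instead invoke Theorem~\ref{nsymdiag} to produce an explicit diagonalizing $U$ over the splitting field $L$, and then handle the degenerate pairs by a separate Zariski-closure argument. Your route is more constructive and your treatment of the discriminant-zero locus is more explicit than the paper's; on the other hand, it relies on a forward reference to Theorem~\ref{nsymdiag}, whereas the paper's one-line orbit argument keeps Lemma~\ref{decom} logically prior to (and motivation for) the diagonalization results that follow. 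There is no circularity --- the proof of Theorem~\ref{nsymdiag} does not use Lemma~\ref{decom} --- but in the paper's exposition the lemma is meant to \emph{suggest} the diagonalization theorem, so quoting the open-orbit fact is the cleaner ordering.
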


\begin{proof} Since it has been established in \cite{Will} that $C_3$ is indeed a covariant, and that there is only one open orbit of $W_4(\bC)$ under the action of $G(\bC) = \GL_2(\bC) \times \GL_3(\bC)$ corresponding to the non-vanishing of the discriminant, it suffices to prove the claim when $(A,B)$ consists of a pair of diagonal forms, say 
\[(A,B ) = \left(\begin{bmatrix} s_1 & 0 & 0 \\ 0 & s_2 & 0 \\ 0 & 0 & s_3 \end{bmatrix}, \begin{bmatrix} t_1 & 0 & 0 \\ 0 & t_2 & 0 \\ 0 & 0 & t_3 \end{bmatrix} \right).\]
In this case we compute explicitly 
\begin{align*} AB^\dagger A & = \begin{bmatrix} s_1 & 0 & 0 \\ 0 & s_2 & 0 \\ 0 & 0 & s_3 \end{bmatrix} \begin{bmatrix} t_2 t_3 & 0 & 0 \\ 0 & t_1 t_3 & 0 \\ 0 & 0 & t_1 t_2 \end{bmatrix} \begin{bmatrix} s_1 & 0 & 0 \\ 0 & s_2 & 0 \\ 0 & 0 & s_3 \end{bmatrix} \\
& = \begin{bmatrix} s_1^2 t_2 t_3 & 0 & 0 \\ 0 & s_2^2 t_1 t_3 & 0 \\ 0 & 0 & s_3^2 t_1 t_2 \end{bmatrix}.
\end{align*}
Let $f_A, f_B, g_{B}$ be as in (\ref{maform}). We then find that 
\begin{align} C_3(x,y,z) & = \begin{vmatrix} \dfrac{\partial f_A}{\partial x} & & \dfrac{\partial f_A}{\partial y} & & \dfrac{\partial f_A}{\partial z} \\ \\
\dfrac{\partial f_B}{\partial x} & & \dfrac{\partial f_B}{\partial y} & & \dfrac{\partial f_B}{\partial z} \\ \\
\dfrac{\partial g_B}{\partial x} & & \dfrac{\partial g_B}{\partial y} & & \dfrac{\partial g_B}{\partial z}
 \end{vmatrix} \\ \notag \\
 & = 8(s_1 t_2 - s_2 t_1)(s_1 t_3 - s_3 t_1)(s_2 t_3 - s_3 t_2) xyz. \notag
\end{align}
This is manifestly decomposable, and we are done on noting that the property of being a decomposable form is invariant under $\GL_3$-action. 
\end{proof}

Lemma \ref{decom} reveals a curious truth: at least in the case of pairs of diagonal forms, it seems that the linear factors of $C_3$ are precisely those that appear in expressing $f_A, f_B$ as diagonal forms. This suggests that $C_3$ plays a role in simultaneously diagonalizing $(A,B)$, answering Question \ref{symdiag}.

\begin{proposition} \label{symdiagprop} Let $K$ be a field of characteristic zero and suppose $(A,B) \in W_4(K)$ be an element of non-zero discriminant. Put 
\[F_{(A,B)}(x,y) = \det(Ax - By) = \prod_{j=1}^3 (s_j x - t_j y) \in \ol{K}[x,y].\]
Then the matrices
\[\operatorname{Adj}(t_i t_j A^\dagger - s_i s_j B^\dagger), 1\leq i < j \leq 3\]
have rank one, and the quadratic forms 
\begin{equation} \label{doubadj} \Bx^t (\operatorname{Adj}( A^\dagger - s_i s_j B^\dagger)) \Bx = \pm 4(s_i t_k - s_k t_i)(s_j t_k - s_k t_j) s_i s_j t_i t_j \ell_{i,j}(\Bx)^2\end{equation}
for $\{i,j,k\} = \{1,2,3\}$. Suppose the linear transformation $(x,y,z) \mapsto (\ell_{1,2}(x,y,z), \ell_{1,3}(x,y,z), \ell_{2,3}(x,y,z)$ has determinant $\pm 1$. Then we have 
\begin{equation} \label{ABdiag} A(x,y,z) = \sum_{\{i,j,k\} = \{1,2,3\}} s_k \ell_{i,j} (x,y,z)^2 \quad \text{and} \quad B(x,y,z) = \sum_{\{i,j,k\} = \{1,2,3\}} t_k \ell_{i,j}(x,y,z)^2.
\end{equation}
\end{proposition}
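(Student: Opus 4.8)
The plan is to reduce Proposition~\ref{symdiagprop} to the diagonal case, just as in the proof of Lemma~\ref{decom}, exploiting the fact that all the objects in play transform covariantly under $\GL_3$. First I would observe that if $(A,B)$ is replaced by $(U^t A U, U^t B U)$ for $U \in \GL_3(\ol K)$, then $F_{(A,B)}$ changes only by the scalar $\det(U)^2$, so the roots $(s_j : t_j)$ are unchanged; meanwhile $A^\dagger \mapsto \det(U)^2 U^{-1} A^\dagger (U^{-1})^t$, and likewise for $B^\dagger$, so $t_i t_j A^\dagger - s_i s_j B^\dagger$ transforms the same way, and its adjugate transforms by $U^t(\cdots)U$ times a nonzero scalar. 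Rank one is preserved under this operation, and a rank-one symmetric matrix $M = \Bv \Bv^t$ corresponds to the quadratic form $(\Bv^t \Bx)^2$, which under $\Bx \mapsto U\Bx$ becomes $((U^t\Bv)^t\Bx)^2$; hence the linear form $\ell_{i,j}$ transforms the way a linear covariant should. So it suffices to verify everything when $(A,B)$ is the diagonal pair $\operatorname{diag}(s_1,s_2,s_3), \operatorname{diag}(t_1,t_2,t_3)$, and then observe that since the discriminant $\prod_{i<j}(s_it_j-s_jt_i)^2$ is nonzero, a generic diagonal pair realizes this, and the identities, being polynomial, extend by the one-open-orbit argument already used.

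Second, I would carry out the diagonal computation explicitly. For the diagonal pair, $A^\dagger = \operatorname{diag}(s_2 s_3, s_1 s_3, s_1 s_2)$ and $B^\dagger = \operatorname{diag}(t_2 t_3, t_1 t_3, t_1 t_2)$, so $t_i t_j A^\dagger - s_i s_j B^\dagger$ is a diagonal matrix whose $k$-th entry (for $\{i,j,k\}=\{1,2,3\}$) is, after factoring, a product of two of the determinantal factors $s_a t_b - s_b t_a$, while the other two diagonal entries each contain a common vanishing factor; a direct check shows exactly one diagonal entry is nonzero in general position. Wait --- I should be careful: I want the adjugate to be rank one, so I need exactly \emph{two} of the three diagonal entries of $t_i t_j A^\dagger - s_i s_j B^\dagger$ to vanish. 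Computing, the $m$-th diagonal entry is $t_i t_j s_a s_b - s_i s_j t_a t_b$ where $\{a,b\} = \{1,2,3\}\setminus\{m\}$; for $m = i$ this is $t_i t_j s_j s_k - s_i s_j t_j t_k = s_j t_j(t_i s_k - s_i t_k)$, for $m = j$ similarly $s_i t_i (t_j s_k - s_j t_k)$, and for $m = k$ it is $t_i t_j s_i s_j - s_i s_j t_i t_j = 0$. So the $k$-th entry vanishes identically and the other two are nonzero in general position --- that gives rank \emph{two}, not rank one. This means the statement as written must refer to $\operatorname{Adj}$ of that matrix: the adjugate of a $3\times 3$ diagonal matrix with exactly one zero entry has rank one, supported on the position of that zero entry. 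Good --- so $\operatorname{Adj}(t_i t_j A^\dagger - s_i s_j B^\dagger) = s_i s_j t_i t_j (t_i s_k - s_i t_k)(t_j s_k - s_j t_k)\, E_{kk}$, which is rank one as claimed, with associated quadratic form a scalar times $x_k^2$; setting $\ell_{i,j}(\Bx) = x_k$ up to sign gives (\ref{doubadj}) in the diagonal case. Then (\ref{ABdiag}) is immediate since $A(\Bx) = \sum_k s_k x_k^2 = \sum_{\{i,j,k\}} s_k \ell_{i,j}(\Bx)^2$, and likewise for $B$.

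Third, I would assemble the general case. Given the transformation law established in the first step, pick $U$ diagonalizing $(A,B)$ over the splitting field $L$ (its existence is what Theorem~\ref{nsymdiag} asserts, but logically it is cleaner to derive it here from the rank-one property: the kernels of the three rank-two matrices $t_i t_j A^\dagger - s_i s_j B^\dagger$, equivalently the supports of the three rank-one adjugates, are three lines that one shows are in general position, and the linear forms $\ell_{i,j}$ vanishing on pairs of them assemble into the required change of variables). Transporting the diagonal identities back through $U$ and tracking the scalar factors $\det(U)^{\pm 2}$, $s_i s_j t_i t_j$, and the determinantal factors, one obtains (\ref{doubadj}) and, under the normalization that the transformation $\Bx \mapsto (\ell_{1,2},\ell_{1,3},\ell_{2,3})$ has determinant $\pm 1$, the clean diagonalization (\ref{ABdiag}) with no stray constants.

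The main obstacle I anticipate is bookkeeping of signs and scalar normalizations: the adjugate introduces $\det(U)^2$ factors, the formula (\ref{doubadj}) carries a $\pm$ and a product of four linear-in-$(s,t)$ factors, and matching these precisely against what the diagonal computation produces --- so that the final forms in (\ref{ABdiag}) come out with coefficients exactly $s_k$ and $t_k$ rather than some nonzero multiple --- requires care. In particular one must check that the hypothesis ``$\det(\ell_{1,2},\ell_{1,3},\ell_{2,3}) = \pm 1$'' is exactly the normalization that kills the extraneous scalar, and that the $\ell_{i,j}$ are only well-defined up to sign (hence the squares, which are canonical). The covariance argument itself is routine once the transformation laws for $A^\dagger$, $B^\dagger$, and $\operatorname{Adj}$ are written down; the one-open-orbit principle from \cite{Will}, already invoked in Lemma~\ref{decom}, then promotes the generic-diagonal identity to an identity of polynomials valid for every nonsingular $(A,B)$.
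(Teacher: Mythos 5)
Your proposal is correct and follows essentially the same route as the paper: reduce to a simultaneously diagonalized pair via the $\GL_3$-covariance of the adjugate construction (justified by the single open orbit under nonvanishing discriminant), verify the rank-one claim and the identities by the explicit diagonal computation, and transport back, with the determinant-$\pm 1$ normalization absorbing the scalar ambiguity. You in fact supply the ``straightforward computation'' that the paper's proof leaves implicit --- in particular your observation that $t_i t_j A^\dagger - s_i s_j B^\dagger$ has exactly one identically vanishing diagonal entry (hence rank two, hence a rank-one adjugate supported at position $k$) is precisely the content of (\ref{doubadj}).
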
 

Note that Proposition \ref{symdiagprop} delivers what was promised for Question \ref{symdiag}: it provides an explicit way to diagonalize a pair $(A,B)$ of ternary quadratic forms, through its covariants and invariants. \\

The proof of Proposition \ref{symdiagprop} relies on a careful study of the geometry of a pair of ternary quadratic forms. Indeed, the $0$-dimensional scheme defined by the complete intersection of a pair $(f_A,f_B)$ of ternary quadratic forms is well studied in the literature; see for example \cite{Bha2} and \cite{Bha3}. \\

Less known is the following scheme. Consider the equation 
\begin{equation} \label{binroot} F_{(A,B)}(x,y) = \det(Ax - By) =(s_1 x - t_1y)(s_2 x - t_2 y)(s_3 x - t_3 y) = 0.\end{equation}
Then the matrices $s_i A - t_i B, i = 1,2,3$ are singular elements in the pencil generated by $(A,B)$, and in fact for $(A,B)$ in general position, all three elements will have rank $2$. Thus for each $i$ the quadratic form $f_i = \Bx^t (s_i A - t_i B) \Bx$ is the product of two linear forms over an algebraic closure of $K$, say $L_i^{(1)}, L_i^{(2)}$. That is, we have a decomposition of the form 
\[f_i(\Bx) = L_i^{(1)}(\Bx) L_i^{(2)}(\Bx) \quad \text{for} \quad i = 1,2,3.\]
The projective points $p_i$ defined by the intersection $L_i^{(1)}(\Bx) = L_i^{(2)} (\Bx) = 0$. The triangle with vertices $p_i, i = 1,2,3$ is known as the \emph{autopolar triangle} \cite{Bri}. \\

Using the forms $f_i, i = 1,2,3$ or the points $p_i, i = 1,2,3$, we may construct the linear forms $\ell_{i,j}$ in Proposition \ref{symdiagprop} as follows. First, using the $f_k$'s directly, we can take the adjugate of $M_{f_i}$, the Gram matrix of $f_i$, and note that its adjugate necessarily has rank $1$. Thus the quadratic forms
\begin{equation} G_{i,j}(\Bx) = \Bx^t \operatorname{Adj} (M_{f_k}) \Bx, \quad \{i,j,k\} = \{1,2,3\}
\end{equation}
are \emph{ramified}, meaning they are geometrically the square of a linear form. In fact, it is proportional to $\ell_{i,j}(\Bx)^2$ in Proposition \ref{symdiagprop}. \\

However, the above process for computing $\ell_{i,j}$ involves first computing the roots of the binary cubic form in (\ref{binroot}), we want to find another way of obtaining a rank-$1$ quadratic form which is proportional to the square of the linear forms $\ell_{i,j}$. This leads to Proposition \ref{symdiagprop}, which we now prove:

\begin{proof}[Proof of Proposition \ref{symdiagprop}] Note that, a prior, it is possible to diagonalize a pair of ternary quadratic forms which, by abuse of notation, we denote by $(A,B)$. Therefore, over an algebraically closed field $\ol{K}$ say, we may write 
\[A(x,y,z) = \sum_{k=1}^3 s_k \ell_{i,j}(x, y, z)^2 \quad \text{and} \quad B(x,y,z) = \sum_{k=1}^3 t_k \ell_{i,j}(x,y,z)^2\]
with $\{i,j,k\} = \{1,2,3\}$. \\

Note that the splitting field of $F_{(A,B})$, say $L$, is at most a degree $6$ extension of $K$. Note that both sides of (\ref{doubadj}) are defined over $L$. In particular, we conclude that the coefficients of $\ell_{i,j}(x_0, x_1, x_2)^2$ must be defined over $L$. This then implies that $\ell_{i,j}(x_0, x_1, x_2)$ is defined over a field extension $L^\prime$ of $L$ having degree at most two. \\

Suppose that $L^\prime$ is a degree $2$ extension of $L$. Then $\Gal(L^\prime/L)$ moves $\ell_{i,j}$ to a conjugate $\ell_{i,j}^\prime$. Note that $(\ell_{i,j}^\prime)^2$ is also defined over $L$, and thus must equal to $\ell_{i,j}^2$. Therefore, $\ell_{i,j}, \ell_{i,j}^\prime$ must be proportional over $\ol{K}$. In particular, $\ell_{i,j}$ is definable over $L$. \\

Further, by multiplying through by scalars, we may assume that the coefficient matrix $\begin{bmatrix} \ell_{1,2} & \ell_{1,3} & \ell_{2,3} \end{bmatrix}$ has determinant $1$. Then a straightforward computation gives the formula (\ref{doubadj}). Moreover, we also find that 
\[F_{(A,B)}(x,y) = (s_1 x - t_1 y)(s_2 x - t_2 y)(s_3 x - t_3 y)\]
with $A,B$ given in (\ref{ABdiag}). Since these identities are preserved under $\GL_3^{\pm 1}(L)$-substitution, Proposition \ref{symdiagprop} follows. 
\end{proof} 

We now prove Theorem \ref{nsymdiag}. 

\subsection{Proof of Theorem \ref{nsymdiag}} Given $(A,B) \in K^2 \otimes \Sym^2 K^n$, we form the binary $n$-ic form $F_{(A,B)}$ as in the ternary case
\[F_{(A,B)}(x,y) = \det(Ax - By).\]
Plainly, $F_{(A,B)}$ is defined over $K$. Let $L/K$ be the splitting field of $F_{(A,B)}$. We note that $\deg (L/K) \leq n!$. \\

From the factorization (\ref{FAB}), we see that the matrices 
\begin{equation} \label{lrm} t_i A - s_i B, \quad 1 \leq i \leq n
\end{equation}
have vanishing determinant. In fact, when $(A,B)$ has non-zero discriminant, each of these matrices have rank exactly equal to $n - 1$. But this implies that the adjugate matrices $\operatorname{Adj}(t_i A - s_i B), 1 \leq i \leq n$ have rank $1$, and hence are squares of linear forms $L_i(\Bx), 1 \leq i \leq n$. As in the proof of Proposition \ref{symdiagprop}, we see that $L_i(\Bx)$ is definable over $L$ for all $1 \leq i \leq n$. \\

However, these $\L_i$'s are not the $\ell_i$'s that we want. To obtain $\ell_i$ from the $\L_i$'s, we define the matrix 
\[\L = \begin{bmatrix} L_1 \\ \vdots \\ L_n \end{bmatrix}.\]
Our hypotheses guarantee that $\L$ is invertible. In particular, the matrix $\L_i$ obtained from $\L$ by deleting the $i$-th row has rank $n - 1$, and thus has a rank-one kernel. Let $\ell_i^\dagger$ be a basis of the kernel, and define $\ell_i(\Bx) = \langle \ell_i^\dagger, \Bx \rangle$. Then, up to scalar multiplication, these are the linear forms we are looking for. Since the $L_i$'s are defined over $L$, so must the $\ell_i$'s. \\ 

With this established, the proof follows from a routine calculation: start with linear forms $\ell_i(\Bx), 1 \leq i \leq n$ defined over $L$, and put 
\[f_A(\Bx) = \sum_{j=1}^n s_j \ell_j(\Bx)^2, \quad f_B(\Bx) = \sum_{j=1}^n t_j \ell_j(\Bx)^2\]
and follow the procedure described above, leading us to conclude that we recover the $\ell_j(\Bx)$ exactly, up to scalar factors. 

\section{Proof of Theorems \ref{MT} and \ref{MT2}}

The proof of Theorem \ref{MT} is now straightforward, and involves an explicit computation. Let $F \in V_4$ be given as in (\ref{binform}). The Hessian covariant of $F$ is then given by 
\begin{align} \label{binhess} H_F & = \begin{vmatrix} \dfrac{\partial^2 F}{\partial x^2} & & \dfrac{\partial^2 F}{\partial x \partial y} \\ \\ \dfrac{\partial^2 F}{\partial x \partial y} & & \dfrac{\partial^2 F}{\partial y^2} \end{vmatrix} \\ 
& = (-9 a_3^2 + 24 a_2 a_4) x^4 + (-12 a_2 a_3 + 72 a_1 a_4)x^3 y + (-12 a_2^2 + 18 a_1 a_3 + 144 a_0 a_4) x^2 y^2 \notag \\
& + (-12 a_1 a_2 + 72 a_0 a_3)xy^3 + (-9 a_1^2 + 24 a_0 a_2) y^4. \notag
\end{align} 
Note that we did not normalize by $3$, as was done in \cite{Cre}. We then find that 
\begin{align} \label{F6cov} F_6(x,y) & = \frac{1}{36} \begin{vmatrix} \dfrac{\partial F}{\partial x} & & \dfrac{\partial F}{\partial y} \\ \\ \dfrac{\partial H_F}{\partial x} & & \dfrac{\partial H_F}{\partial y} \end{vmatrix} \\ 
& = (a_3^3 - 4 a_2 a_3 a_4 + 8 a_1 a_4^2) x^6 + 2(a_2 a_3^2 - 4a_2^2 a_4 + 2 a_1 a_3 a_4 + 16 a_0 a_4^2) x^5 y \notag \\
& + 5(a_1 a_3^2 - 4 a_1 a_2 a_4 + 8 a_0 a_3 a_4) x^4 y^2 + 20 (a_0 a_3^2 - a_1^2 a_4) x^3 y^3  - 5(a_1^2 a_3 - 4 a_0 a_2 a_3 + 8 a_0 a_1 a_4) x^2 y^4 \notag \\
& -2 (a_1^2 a_2 - 4 a_0 a_2^2 + 2 a_0 a_1 a_3 + 16 a_0^2 a_4) xy^5 - (a_1^3 - 4 a_0 a_1 a_2 + 8 a_0^2 a_3) y^6. \notag
\end{align}
On the other hand, for $(A_0, B_F)$ given in (\ref{phiemb}) we have
\begin{equation} A_0 B_F^\dagger A_0 = \frac{1}{16} \begin{bmatrix} - a_3^2 + 4 a_2 a_4 & 4 a_1 a_4 & a_1 a_3 \\ 4 a_1 a_4 & 16 a_0 a_4 & 4 a_0 a_3 \\ a_1 a_3 & 4 a_0 a_3 & - a_1^2 + 4 a_0 a_2 \end{bmatrix}.
\end{equation}
Then we find that 
\begin{align} C_3(u,v,w) & = \begin{vmatrix} \dfrac{\partial f_{A_0}}{\partial u} & & \dfrac{\partial f_{A_0}}{\partial v} & & \dfrac{\partial f_{A_0}}{\partial w} \\ \\ 
\dfrac{\partial f_{B_F}}{\partial u} & & \dfrac{\partial f_{B_F}}{\partial v} & & \dfrac{\partial f_{B_F}}{\partial w} \\ \\ 
\dfrac{\partial g_{B_F}}{\partial u} & & \dfrac{\partial g_{B_F}}{\partial v} & & \dfrac{\partial g_{B_F}}{\partial w} \end{vmatrix} \\ 
& = (a_3^3 - 4 a_2 a_3 a_4 + 8 a_1 a_4^2) u^3 + 2(a_2 a_3^2 - 4 a_2^2 a_4 + 2 a_1 a_3 a_4 + 16 a_0 a_4^2) u^2 v \notag \\
& + 4(a_1 a_3^2 - 4 a_1 a_2 a_4 + 8 a_0 a_3 a_4) uv^2 + 8 (a_0 a_3^2 - a_1^2 a_4) v^3 + (a_1 a_3^2 - 4 a_1 a_2 a_4 + 8 a_0 a_3) u^2 w \notag \\
& + 12 (a_0 a_3^2 - a_1^2 a_4) uvw - 4(a_1^2 a_3 - 4 a_0 a_2 a_3 + 8 a_0 a_1 a_4) v^2 w \notag \\
& - (a_1^2 a_3 - 4 a_0 a_2 a_3 + 8 a_0 a_1 a_4) uw^2 - 2(a_1^2 a_2 - 4 a_0 a_2^2 + 2 a_0 a_1 a_3 + 16 a_0^2 a_4) vw^2 \notag \\
& - (a_1^3 - 4 a_0 a_1 a_2 + 8 a_0^2 a_3) w^3. \notag
\end{align}
It is then routine to check that 
\[F_6(x,y) = C_3(x^2, xy, y^2),\]
as required. Similarly, we have 
\begin{align} g_{B_F} (x^2 ,xy, y^2) & = (-a_3^2 + 4 a_2 a_4) x^4 + (8a_1 a_4 + 2 a_1 a_3) x^3 y + (2 a_1 a_3 + 16 a_0 a_4) x^2 y^2 \\
& + 8 a_0 a_3 xy^3 + (-a_1^2 + 4 a_0 a_2) y^4, \notag \\
g_{A_0}(x^2, xy, y^2) & = - a_3^2 x^4 + (-4a_2 a_3 + 8 a_1 a_4)x^3 y + (-4a_2^2 + 2 a_1 a_3 + 16 a_0 a_4) x^2 y^2 \notag \\ 
& +(-4 a_1 a_2 + 8 a_0 a_3) xy^3 - a_1^2 y^4. \notag
\end{align} 
Comparing with (\ref{binhess}) we reach the conclusion of Theorem \ref{MT2}.

\end{document}